\documentclass[letterpaper, 10 pt, conference]{ieeeconf}  

\IEEEoverridecommandlockouts                             
\usepackage{float}
\usepackage{verbatim}
\usepackage{paralist}
\usepackage{amsfonts}
\usepackage{graphics} 
\usepackage{epsfig} 
\usepackage{mathptmx} 
\usepackage{times} 
\usepackage{amsmath} 
\usepackage{amssymb}  
\usepackage{physics}
\usepackage{mdwmath}
\usepackage{mdwtab}
\usepackage{color}
\usepackage[hidelinks]{hyperref}
\usepackage{algorithm}
\usepackage{algorithmic}

\usepackage{caption}
\usepackage{subcaption}
\usepackage{amsmath} 
\usepackage{bm}      

\usepackage{tikz}
\usetikzlibrary{shapes, arrows.meta, positioning}

\newtheorem{proposition}{Proposition}%

\newtheorem{remark}{Remark}

\def\BibTeX{{\rm B\kern-.05em{\sc i\kern-.025em b}\kern-.08em
    T\kern-.1667em\lower.7ex\hbox{E}\kern-.125emX}}
\begin{document}

\title{Quantum-Inspired Tensor Networks for Approximating PDE Flow Maps}

\author{Nahid Binandeh Dehaghani, Ban Q. Tran, Rafal Wisniewski, Susan Mengel, and A. Pedro Aguiar
\thanks{Nahid Binandeh Dehaghani is with the Department of Electronic Systems, Aalborg University, Aalborg, Denmark.
        {\tt\small nahidbd@es.aau.dk}}%
\thanks{Ban Q. Tran is with the Department of Computer Science, Texas Tech University, Lubbock, USA, and Department of Computing Fundamentals, FPT University, Hanoi, Vietnam.
        {\tt\small bantran@ttu.edu - bantq3@fe.edu.vn}}
\thanks{Rafal Wisniewski is with the Department of Electronic Systems, Aalborg University, Aalborg, Denmark. {\tt\small raf@es.aau.dk}}
\thanks{Susan Mengel is with the Department of Computer Science, Texas Tech University, Lubbock, USA. {\tt\small susan.mengel@ttu.edu}} 
\thanks{A. Pedro Aguiar is with SYSTEC-ARISE, Faculty of Engineering, University of Porto, Porto, Portugal. {\tt\small 	pedro.aguiar@fe.up.pt}}
 }

\maketitle

\begin{abstract}
We investigate quantum-inspired tensor networks (QTNs) for approximating flow maps of hydrodynamic partial differential equations (PDEs). Motivated by the effective low-rank structure that emerges after tensorization of discretized transport and diffusion dynamics, we encode PDE states as matrix product states (MPS) and represent the evolution operator as a structured low-rank matrix product operator (MPO) in tensor-train form (e.g., arising from finite-difference discretizations assembled in MPO form).
The MPO is applied directly in MPS form, and rank growth is controlled via canonicalization and SVD-based truncation after each step.
We provide theoretical context through standard matrix product properties, including exact MPS representability bounds, local optimality of SVD truncation, and a Lipschitz-type multi-step error propagation estimate. Experiments on one- and two-dimensional linear advection--diffusion and nonlinear viscous Burgers equations demonstrate accurate short-horizon prediction, favorable scaling in smooth diffusive regimes, and error growth in nonlinear multi-step predictions.
\end{abstract}

\section{Introduction}
\label{sec:introduction}

Hydrodynamic partial differential equations (PDEs) governing transport and diffusion
arise across fluid dynamics, materials transport, and nonlinear wave propagation.
Although structurally simple, such equations can develop sharp gradients,
multiscale interactions, and sensitivity to perturbations, making accurate and scalable
numerical simulation increasingly challenging as spatial resolution and dimension increase.

An alternative to classical discretization-based solvers is to approximate the discrete-time
\emph{flow map} that advances the state over a fixed time step. This viewpoint is closely related to
Koopman/operator perspectives
\cite{Koopman1931, Brunton2016}, where dynamics are represented through linear operators acting on
high-dimensional state spaces. However, for spatially discretized PDEs the state dimension grows
exponentially with resolution, rendering naive operator representations and applications computationally infeasible.

Tensor network representations provide a principled mechanism for mitigating this
curse of dimensionality. Originally developed in quantum many-body physics
\cite{Schollwock2011, Orus2014}, tensor networks exploit low-rank structure to represent
high-dimensional states and operators efficiently. In particular, matrix product
states (MPS), also known as tensor trains \cite{Orus2014, Oseledets2011}, admit representations
whose complexity scales linearly with dimension when correlations remain sufficiently
localized. Such quantum-inspired representations have increasingly been adopted in
classical scientific computing for compression and structured operator approximation
\cite{Cichocki2016,Peddinti2024CFD,Gourianov2022Turbulence,Kiffner2023ROM}. Throughout, \emph{QTN} refers to the quantum-inspired tensor-network framework instantiated with MPS (tensor trains) for states and 
matrix product operator (MPO) for operators.

Many transport-dominated PDEs exhibit effective low-rank structure over finite
time horizons: smooth initial data, diffusive effects, and locality of interactions
tend to limit long-range correlations in discretized states \cite{Peddinti2024CFD,Gourianov2022Turbulence,Kiffner2023ROM}. This suggests that
PDE flow maps may admit compact tensor-network representations. At the same time,
nonlinear dynamics can generate complex correlations that challenge low-rank
approximations, motivating a systematic study of both the capabilities and limitations
of tensor-network time stepping.
We make the following contributions:
\begin{itemize}
\item We investigate a quantum-inspired tensor network framework
for approximating PDE flow maps, representing states as MPS and
evolution operators as low-rank MPOs in tensor-train form.
\item We provide theoretical context grounded in standard matrix product
properties, including exact MPS representability bounds, optimality of SVD-based truncation, and a Lipschitz-type multi-step error propagation estimate.
\item We perform systematic numerical experiments on linear and nonlinear
hydrodynamic PDEs in one and two spatial dimensions, benchmarking short-horizon accuracy, scalability, and long-horizon stability against reference solvers. We empirically characterize the limitations of low-rank approximations with truncation in nonlinear regimes, establishing QTNs as a principled
baseline for future physics-informed and data-driven extensions.
\end{itemize}

\paragraph*{Organization of the Paper} The remainder of this paper is organized as follows. Section II reviews the tensor-network preliminaries, including MPS and MPO representations. Section III introduces the proposed QTN framework for tensorized PDE state representation and low-rank one-step operator modeling. Section IV presents the compressed time-stepping methodology and key theoretical properties. Section V reports numerical experiments on linear advection–diffusion and nonlinear viscous Burgers equations in one and two dimensions. Section VI concludes the paper and outlines future directions.

\section{Preliminaries}
\label{sec:preliminaries}

This section introduces the tensor-network concepts and notation used in this
work, focusing on matrix product representations of high-dimensional states
(MPS/TT) and operators (MPO/TT-operators).

\subsection{Matrix Product States}

Matrix Product States (MPS), also known as tensor trains, provide structured
low-rank representations of high-dimensional tensors and originate from
quantum many-body physics. In our setting, they serve as a compression
framework for tensorized discretized PDE solution fields and for efficient
application of structured operators. We summarize the notation and canonical
forms used throughout the paper.

\subsubsection{High-Dimensional States and Tensorization}

Let a discretized PDE state sampled on $N$ spatial grid points be represented
as a vector $u \in \mathbb{R}^{N}$. To enable tensor-network compression, we
\emph{tensorize} $u$ by reshaping it into an order-$n$ tensor
$\psi \in \mathbb{R}^{d \times \cdots \times d}$ such that $N = d^n$.\footnote{
If $N$ is not exactly of the form $d^n$, one may use padding or a mixed-radix
tensorization; we restrict to $N=d^n$ for clarity.}
Using $s_i \in \{0,1,\dots,d-1\}$ induces the expansion
\begin{equation*}
|\Psi\rangle
\in \bigotimes_{i=1}^{n} \mathbb{R}^{d}
\;\cong\; \mathbb{R}^{d^n},
\qquad
|\Psi\rangle
= \sum_{s_1,\dots,s_n}
\psi_{s_1 \dots s_n}\, |s_1\rangle \otimes \cdots \otimes |s_n\rangle,
\end{equation*}
where $\{|s_1\rangle \otimes \cdots \otimes |s_n\rangle\}$ denotes the
tensor-product basis associated with the chosen index encoding of the spatial
grid. In the present PDE setting, the indices $s_i$ do not correspond to
physical subsystems; they arise solely from the tensorization map.

Although storing $\psi$ still requires $d^n = N$ entries, dense representations
of linear maps on this space require $d^{2n} = N^2$ coefficients, and naive
operator application and contractions scale accordingly. This motivates representing both states and operators in structured low-rank tensor formats.

\subsubsection{Matrix Product State Representation}

An MPS factorizes the coefficient tensor $\psi_{s_1 \dots s_n}$ into a product
of site-local cores:
\begin{equation}
|\Psi\rangle
= \sum_{s_1,\dots,s_n}
A_1^{(s_1)} A_2^{(s_2)} \cdots A_n^{(s_n)}
\, |s_1 \dots s_n\rangle,
\end{equation}
and equivalently,
$\psi_{s_1 \dots s_n} = A_1^{(s_1)} A_2^{(s_2)} \cdots A_n^{(s_n)}$,
where matrix multiplication over the virtual indices is implied. Each core
$A_i^{(s_i)} \in \mathbb{R}^{D_{i-1} \times D_i}$, and
$(D_0,\dots,D_n)$ denote the bond dimensions with $D_0 = D_n = 1$.

The total number of parameters scales as
$\sum_{i=1}^{n} D_{i-1}\, d\, D_i$.
Under a uniform bound $D_i \le \chi$, this is $O(n d \chi^2)$, i.e., linear in
$n$ and polynomial in the maximal bond dimension $\chi = \max_i D_i$. When the
bond dimensions remain bounded (or grow slowly) as $n$ increases, this yields
substantial compression compared to storing the full tensor of size $d^n$.

\subsubsection{Canonical Forms}

MPS representations are not unique due to gauge freedom: inserting
$X X^{-1}$ between adjacent cores leaves the represented tensor unchanged.
Canonical forms partially fix this gauge and play a central role in numerical
stability and efficient computation \cite{Schollwock2011}.

\paragraph{Left-canonical form}
Applying successive singular value decompositions (SVDs) from left to right
yields MPS cores satisfying
\begin{equation}
\sum_{s_i}
\left(A_i^{(s_i)}\right)^\top A_i^{(s_i)} = I_{D_i},
\end{equation}
where $I_D$ denotes the $D \times D$ identity matrix.
This enforces orthonormality on the right virtual index (after reshaping the
core into a $(D_{i-1} d)\times D_i$ matrix). Throughout this work, we restrict
attention to real-valued tensor networks, so transposition and Hermitian
conjugation coincide.

\paragraph{Right-canonical form}
Applying SVDs from right to left yields cores satisfying
\begin{equation}
\sum_{s_i}
A_i^{(s_i)} \left(A_i^{(s_i)}\right)^\top = I_{D_{i-1}},
\end{equation}
which enforces orthonormality on the left virtual index (after reshaping the
core into a $D_{i-1}\times(d D_i)$ matrix).

\paragraph{Mixed-canonical form}
For a given central site $k$, an MPS can be written in mixed-canonical form as
\begin{equation}
|\Psi\rangle
= \sum_{s_1,\dots,s_n}
A_1^{(s_1)} \cdots A_{k-1}^{(s_{k-1})}\,
\Gamma_k^{(s_k)} \, C_k \,
A_{k+1}^{(s_{k+1})} \cdots A_n^{(s_n)}
\, |s_1 \dots s_n\rangle,
\end{equation}
where the tensors at sites $1,\dots,k-1$ are left-canonical and those at
sites $k+1,\dots,n$ are right-canonical. The diagonal (nonnegative) matrix $C_k$ contains
the singular values (Schmidt coefficients) associated with the bipartition at
bond $(k,k+1)$, and $\Gamma_k^{(s_k)} \in \mathbb{R}^{D_{k-1} \times D_k}$ is
the center tensor. Depending on convention, $C_k$ may be absorbed into the
center tensor or placed on the adjacent bond; we use the displayed convention
for convenience. This form explicitly separates orthonormal left and right
blocks and is used to improve conditioning and control bond-dimension growth.

\subsection{Matrix Product Operators}

Matrix Product Operators (MPOs) \cite{Schollwock2011, Orus2014} provide a
tensor-network representation of linear maps acting on high-dimensional states
expressed in MPS form. A linear operator
$\mathcal{O} : \mathbb{R}^{d^n} \to \mathbb{R}^{d^n}$ acts on a state
$|\Psi\rangle = \sum_{s'_1,\dots,s'_n} \psi_{s'_1 \dots s'_n}
|s'_1 \dots s'_n\rangle$ according to
\begin{equation}
\mathcal{O} |\Psi\rangle
=
\sum_{s_1,\dots,s_n}
\sum_{s'_1,\dots,s'_n}
O_{s_1 \dots s_n}^{\, s'_1 \dots s'_n}\,
\psi_{s'_1 \dots s'_n}\,
|s_1 \dots s_n\rangle,
\end{equation}
where $O_{s_1 \dots s_n}^{\, s'_1 \dots s'_n}$ are the coefficients of
$\mathcal{O}$ in the tensor-product basis. We use the convention that $s'$
indexes the \emph{input} (column) multi-index and $s$ indexes the \emph{output}
(row) multi-index. The full operator tensor has $d^{2n}$ entries and is
therefore prohibitively large for large $n$.

An MPO factorizes the operator tensor into a sequence of site-local cores:
\begin{equation}
O^{s'_1 \dots s'_n}_{s_1 \dots s_n}
=
\sum_{\alpha_1,\dots,\alpha_{n-1}}
W^{(s_1,s'_1)}_{1,\; 1,\alpha_1}
W^{(s_2,s'_2)}_{2,\; \alpha_1,\alpha_2}
\cdots
W^{(s_n,s'_n)}_{n,\; \alpha_{n-1},1}
\end{equation}
where each MPO core
$W_i \in \mathbb{R}^{R_{i-1} \times d \times d \times R_i}$ with
$R_0 = R_n = 1$. For fixed $(s_i,s'_i)$, the slice
$W_i^{(s_i,s'_i)}$ is a matrix in $\mathbb{R}^{R_{i-1}\times R_i}$, and
$R = \max_i R_i$ denotes the MPO bond dimension. This factorization exploits
low-rank structure to provide compact representations of high-dimensional
linear operators.

\begin{remark}[Action of an MPO on an MPS]
Given an MPS with cores $A_i^{(s_i)} \in \mathbb{R}^{D_{i-1} \times D_i}$, the
application of an MPO produces a new MPS whose site tensors are obtained by
contracting the physical input index and combining the virtual indices:
\begin{equation}
\widetilde{A}_i^{(s_i)}
=
\sum_{s'_i}
W_i^{(s_i,s'_i)} \otimes A_i^{(s'_i)},
\end{equation}
where $\widetilde{A}_i^{(s_i)} \in
\mathbb{R}^{(R_{i-1} D_{i-1}) \times (R_i D_i)}$ and $\otimes$ denotes the Kronecker product over the virtual indices. The resulting bond dimensions
reflect the combined ranks of the MPO and MPS and are typically reduced using
canonicalization and SVD-based truncation to enforce prescribed bond-dimension
limits. In practice, these contractions are implemented using optimized tensor
contraction paths to ensure computational efficiency.
\end{remark}

\begin{remark}[Relevance to PDE Flow Maps]
In our framework, the discrete-time PDE update
$u^{k+1} = \Phi_{\Delta t}(u^k)$ is represented (after tensorization/encoding)
by an MPO acting on an MPS representation of the discretized state. For
nonlinear PDEs, this MPO-based one-step update should be interpreted as a
low-rank approximation of the one-step map on the states encountered in
the rollout. This representation enables compressed operator representation and
application in high-dimensional settings by exploiting locality and low-rank
structure that often arise in transport- and diffusion-dominated regimes.
\end{remark}

\section{Quantum-Inspired Tensor Network (QTN) Models}
\label{sec:QTN}

This section introduces the tensor-network representations underlying the
proposed framework. We describe how discretized PDE states are encoded as MPS,
and how discrete-time flow maps are modeled using structured low-rank operators
represented in matrix product operator (MPO) or tensor-train (TT) form.

\subsection{Discretized PDE State Representation}

Consider a hydrodynamic PDE defined on a spatial domain
$\Omega \subset \mathbb{R}^{p}$ with solution $u(x,t)$ discretized on a grid of
$N=d^n$ points. For clarity, we begin with the one-dimensional case. At time
$t_k$, the discretized state is
\begin{equation}
\mathbf{u}_k =
\big[u(x_0,t_k),\, u(x_1,t_k),\, \dots,\, u(x_{N-1},t_k)\big]^\top
\in \mathbb{R}^N .
\end{equation}
Each spatial index $i \in \{0,\dots,N-1\}$ is written in base-$d$ form as
$i \equiv (i_1,i_2,\dots,i_n)$ with $i_j \in \{0,1,\dots,d-1\}$, which induces a
tensorization of $\mathbf{u}_k$ into an order-$n$ tensor
\begin{equation}
\mathcal{U}_k(i_1,i_2,\dots,i_n) = \mathbf{u}_k[i]
\in \mathbb{R}^{d \times \cdots \times d}.
\end{equation}
(Binary encoding corresponds to the special case $d=2$.) We denote by
$\mathrm{encode}:\mathbb{R}^N \to \mathbb{R}^{d^n}$ the fixed reshaping map
$\mathbf{u}_k \mapsto \Psi_k$, where $\Psi_k$ is the vectorization of
$\mathcal{U}_k$ (equivalently, the coefficient tensor in the tensor-product
basis of Section~\ref{sec:preliminaries}). The inverse reshaping map is denoted
$\mathrm{decode}$, used only for visualization/comparison in experiments.

Following Section~\ref{sec:preliminaries}, the tensor $\mathcal{U}_k$ is
approximated by an MPS of the form
\begin{equation}
\mathcal{U}_k(i_1,\dots,i_n)
\approx
\sum_{\alpha_1,\dots,\alpha_{n-1}}
A_1^{(i_1)} A_2^{(i_2)} \cdots A_n^{(i_n)},
\label{eq:mps_state_rep}
\end{equation}
where the MPS cores satisfy $A_j^{(i_j)} \in \mathbb{R}^{D_{j-1} \times D_j}$
with $D_0 = D_n = 1$. Canonical forms (left-, right-, and mixed-canonical) are
used to improve numerical stability and to control bond-dimension growth.

This construction extends to higher spatial dimensions by mapping a
multi-index grid coordinate (e.g., $(\ell_1,\dots,\ell_p)$) to a single
base-$d$ index $i$ and then applying the same tensorization; in practice, we
choose the index ordering (tensorization architecture) to preserve spatial
locality as much as possible.

\subsection{Time-Stepping Operator in Tensorized Coordinates}

Let $\mathcal{F}_{\Delta t}$ denote the exact (or reference) discrete-time flow
map of the PDE,
\begin{equation}
\mathbf{u}_{k+1} = \mathcal{F}_{\Delta t}(\mathbf{u}_k),
\end{equation}
which advances the discretized state over a fixed time step $\Delta t$.
Rather than forming $\mathcal{F}_{\Delta t}$ explicitly as a dense operator on
$\mathbb{R}^{N}$, we represent and apply a structured low-rank one-step operator
in tensorized coordinates using a tensor-network operator $\mathcal{T}$.

Let $\Psi_k = \mathrm{encode}(\mathbf{u}_k) \in \mathbb{R}^{d^n}$ denote the
tensorized representation of the discretized state. The tensor-network
time-stepping model is
\begin{equation}
\Psi_{k+1}^{\mathrm{QTN}} = \mathcal{T}(\Psi_k),
\label{eq:qtn_onestep}
\end{equation}
where $\mathcal{T} : \mathbb{R}^{d^n} \to \mathbb{R}^{d^n}$ is a linear operator
represented as a structured low-rank MPO (equivalently, a TT-operator). The
predicted physical state is obtained by decoding:
\begin{equation}
\mathbf{u}_{k+1}^{\mathrm{QTN}}
=
\mathrm{decode}\!\left(\Psi_{k+1}^{\mathrm{QTN}}\right)
=
\mathrm{decode}\!\left(\mathcal{T}(\Psi_k)\right).
\end{equation}

In implementation, applying an MPO to an MPS typically increases intermediate
bond dimensions. To control rank growth, we apply canonicalization and
SVD-based truncation after each operator application. We denote this
(non-differentiable) compression/projection by
$\Pi_{\chi_{\max},\varepsilon_{\mathrm{SVD}}}(\cdot)$, where $\chi_{\max}$ is a bond-dimension
cap and $\varepsilon_{\mathrm{SVD}}$ is an SVD threshold. The practical one-step predictor is
therefore
\begin{equation}
\Psi_{k+1}^{\mathrm{QTN}}
=
\Pi_{\chi_{\max},\varepsilon_{\mathrm{SVD}}}\!\left(\mathcal{T}(\Psi_k)\right).
\label{eq:qtn_projected}
\end{equation}

Although the exact flow map $\mathcal{F}_{\Delta t}$ may be nonlinear, the
MPO-based update \eqref{eq:qtn_projected} provides a low-rank \emph{approximation}
of the one-step evolution in tensorized coordinates on the states encountered
along the rollout. From a numerical perspective, $\mathcal{T}$ can be obtained
from a discretization of the PDE (e.g., finite-difference operators assembled
in MPO form) and combined with a chosen time-integration scheme  (e.g., explicit Euler), while
$\Pi_{\chi_{\max},\varepsilon_{\mathrm{SVD}}}$ controls complexity by preventing uncontrolled
rank growth during repeated time stepping.

\section{Methods} \label{sec:methods}

This section details the tensor-network formulation used for approximating
discrete-time PDE flow maps via compressed time stepping. Discretized states and
operators are represented using MPS and MPO structures, enabling compressed
manipulation of high-dimensional dynamics. Tensorization, MPO--MPS application,
canonicalization, and bond-dimension truncation are performed within the
tensor-network framework. Truncation is treated as a separate
(non-differentiable) projection step that controls complexity during rollout.

\subsection{Tensor-Network Time-Stepping Pipeline}
Algorithm~\ref{alg:qtn} summarizes the compressed QTN time-stepping loop. Each
step applies the MPO update to the current MPS state and truncates the resulting
bond dimensions to enforce the prescribed complexity limits.

\paragraph{Tensorization and State Encoding}
A discretized PDE state $\mathbf{u}\in\mathbb{R}^{N}$ is tensorized via a fixed
index-encoding map (Section~\ref{sec:preliminaries}) by reshaping into an
order-$n$ tensor with local dimension $d$, such that $N=d^{n}$ (binary/QTT
corresponds to $d=2$). The tensor is encoded as an MPS using a prescribed
tensorization architecture $\mathcal{A}$ (e.g., sequential, bit-reversed, or
locality-preserving layouts). This mapping defines a correspondence between
grid indices and MPS sites and is chosen to preserve spatial locality to the
extent possible. Encoded states are optionally brought into mixed-canonical form
to improve numerical conditioning.

\paragraph{Operator Representation}
The one-step update in tensorized coordinates is represented as an MPO (TT
operator) with cores
$W_i \in \mathbb{R}^{R_{i-1}\times d\times d\times R_i}$ and bond dimensions
$R_i \le R_{\max}$.
The number of MPO parameters scales as
$\sum_{i=1}^{n} R_{i-1} d^{2} R_i = \mathcal{O}(n\,R_{\max}^{2}\,d^2)$ under
uniform rank bounds, enabling compact representations of high-dimensional
operators. In practice, such MPOs may arise from a chosen PDE discretization and
time-integration scheme (e.g., finite-difference operators assembled in MPO form
and combined into a one-step map) \cite{KazeevKhoromskij2012QTTLaplace,Peddinti2024CFD}. In our experiments, derivative MPOs are constructed from finite-difference stencils; their MPO ranks are therefore fixed by the operator construction. 

\paragraph{MPO--MPS Application}
Given an input MPS $\Psi_k=\mathrm{encode}(\mathbf{u}_k)$, the (pre-compression)
next tensorized state is obtained by contracting each MPO core with the
corresponding MPS core:
\[
\widetilde{\Psi}_{k+1} = \mathcal{T}(\Psi_k).
\]
This contraction sums over physical indices and combines virtual indices,
yielding an MPS whose intermediate bond dimensions typically increase due to the
combined MPO and MPS ranks.

\paragraph{Canonicalization and Compression}
To control rank growth, the predicted MPS is canonicalized and compressed using
an SVD-based sweep \cite{Schollwock2011,Orus2014}. At each bond, singular values
below a threshold $\varepsilon_{\mathrm{SVD}}$ are discarded and the MPS bond
dimension is capped at $\chi_{\max}$. We denote the resulting projection by
$\Pi_{\chi_{\max},\varepsilon_{\mathrm{SVD}}}(\cdot)$, where
$\Pi_{\chi_{\max},\varepsilon_{\mathrm{SVD}}}$ comprises a canonicalization sweep
followed by SVD truncation at each bond. 
This procedure improves numerical conditioning and prevents uncontrolled rank
growth, which is particularly important in transport-dominated or nonlinear
regimes.

\paragraph{Rollout and Reconstruction}
Given an initial condition $\mathbf{u}_0$, we iterate the projected one-step map
to generate a rollout in tensorized coordinates:
$\Psi_{k+1}^{\mathrm{QTN}}=\Pi_{\chi_{\max},\varepsilon_{\mathrm{SVD}}}(\mathcal{T}(\Psi_k^{\mathrm{QTN}}))$,
with $\Psi_0^{\mathrm{QTN}}=\mathrm{encode}(\mathbf{u}_0)$.
Dense reconstruction $\widehat{\mathbf{u}}_k=\mathrm{decode}(\Psi_k^{\mathrm{QTN}})$
is performed only for visualization and for comparison with a reference solver.

\begin{algorithm}[t]\small
\caption{Quantum-Inspired Tensor-Network (QTN) Time Stepping (MPO--MPS with Truncation)}
\label{alg:qtn}
\begin{algorithmic}[1]
\REQUIRE Initial state $\mathbf{u}_0$,
tensorization architecture $\mathcal{A}$,
one-step MPO $\mathcal{T}$ (for step size $\Delta t$, bonds $\le R_{\max}$),
MPS bond cap $\chi_{\max}$,
SVD threshold $\varepsilon_{\mathrm{SVD}}$,
number of steps $K$
\ENSURE Rollout $\{\Psi_k^{\mathrm{QTN}}\}_{k=0}^{K}$ (and optionally $\{\widehat{\mathbf{u}}_k\}$)

\STATE Encode initial state $\Psi_0^{\mathrm{QTN}} \gets \mathrm{encode}(\mathbf{u}_0;\mathcal{A})$.
\FOR{$k=0$ to $K-1$}
    \STATE $\widetilde{\Psi}_{k+1} \gets \mathcal{T}(\Psi_k^{\mathrm{QTN}})$
    \COMMENT{MPO--MPS contraction}
    \STATE $\Psi_{k+1}^{\mathrm{QTN}} \gets \Pi_{\chi_{\max},\varepsilon_{\mathrm{SVD}}}(\widetilde{\Psi}_{k+1})$
    \COMMENT{canonicalize \& truncate}
    \STATE 
    $\widehat{\mathbf{u}}_{k+1} \gets \mathrm{decode}(\Psi_{k+1}^{\mathrm{QTN}})$
\ENDFOR
\STATE \textbf{return} $\{\Psi_k^{\mathrm{QTN}}\}_{k=0}^{K}$ (and optionally $\{\widehat{\mathbf{u}}_k\}$)
\end{algorithmic}
\end{algorithm}

\begin{remark}
The QTN framework serves as a compressed time-stepping baseline for PDE flow maps.
Unlike tensor-network learning approaches that
identify a time-stepping operator from data, here
the one-step MPO $\mathcal{T}$ represents a structured update in tensorized
coordinates (e.g., arising from a chosen discretization and time integrator),
while $\Pi_{\chi_{\max},\varepsilon_{\mathrm{SVD}}}$ controls complexity by truncating
rank growth during rollout. This setting enables a focused assessment of the
capabilities and limitations of low-rank tensor networks for scalable PDE time
stepping and provides a baseline for future physics-informed or data-driven
extensions.
\end{remark}

\subsection{Theoretical Properties}
\label{sec:theory}
The tensor-network formulation employed in this work is grounded in standard
linear-algebraic properties of matrix product states and operators. These
properties provide structural insight into the proposed QTN models and clarify
both their expressive capabilities and intrinsic limitations for PDE time
stepping and operator approximation. In particular, they address: (i)
representability of tensorized discretized states in MPS/TT form, (ii) local
optimality of SVD-based truncation used during compression, and (iii)
accumulation of one-step approximation errors under repeated time stepping.

\begin{proposition}[Exact MPS/TT representation
]
\label{prop:mps-exact}
Let $u \in \mathbb{R}^{d^n}$ be any discretized state reshaped into an order-$n$
tensor $\mathcal{U} \in \mathbb{R}^{d \times \cdots \times d}$. Then there exists
an exact (i.e., without truncation) open-boundary MPS/TT representation of $\mathcal{U}$ with bond
dimensions satisfying
\[
D_i \le d^{\min(i,n-i)}, \qquad i=1,\dots,n-1.
\]
In particular, any such tensor admits an exact MPS representation with maximal
bond dimension at most $d^{\lfloor n/2 \rfloor}$, attained at the cut between
sites $\lfloor n/2 \rfloor$ and $\lfloor n/2 \rfloor+1$.
\end{proposition}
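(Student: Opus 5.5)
The plan is the standard TT-SVD construction: build the MPS by successive SVDs from left to right, and bound each bond dimension by the rank of the corresponding unfolding. For each cut $i\in\{1,\dots,n-1\}$, define the unfolding matrix
\[
\mathcal{U}^{\langle i\rangle} \in \mathbb{R}^{d^{i}\times d^{n-i}},\qquad
\mathcal{U}^{\langle i\rangle}_{(s_1\dots s_i),(s_{i+1}\dots s_n)} = \mathcal{U}(s_1,\dots,s_n).
\]
Its rank is trivially at most $\min(d^i,d^{n-i}) = d^{\min(i,n-i)}$. The claim then reduces to producing an exact MPS whose $i$-th bond dimension equals $r_i := \operatorname{rank}\mathcal{U}^{\langle i\rangle}$.

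\textbf{Step 1 (first core).} Compute the SVD of $\mathcal{U}^{\langle 1\rangle}=U_1\Sigma_1V_1^\top$, keeping only the $r_1$ nonzero singular values. Set $A_1^{(s_1)}$ to be the $s_1$-th row of $U_1$, so $D_1=r_1$. The remainder $\Sigma_1V_1^\top$ is an $r_1\times d^{n-1}$ matrix, which we reshape into an $(r_1 d)\times d^{n-2}$ matrix $M_2$.

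\textbf{Step 2 (induction).} At step $i$, $M_i$ has rows indexed by $(\alpha_{i-1},s_i)$. Take its reduced SVD, reshape the left factor into a left-canonical core $A_i$ with $D_i=\operatorname{rank}M_i$, and pass $\Sigma V^\top$ onward. After $n-1$ steps the last remainder is the core $A_n$. Exactness holds because only zero singular values are ever discarded, so the product of cores reproduces $\mathcal{U}$ entrywise.

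\textbf{Step 3 (rank identity, the main obstacle).} The key point is to show $\operatorname{rank}M_i=r_i$, or at least $\le r_i$. Because the cores $A_1,\dots,A_{i-1}$ are left-canonical, the interface matrix
\[
L_{i-1}\in\mathbb{R}^{d^{i-1}\times D_{i-1}},\qquad
(L_{i-1})_{(s_1..s_{i-1}),\alpha}=\big(A_1^{(s_1)}\cdots A_{i-1}^{(s_{i-1})}\big)_{\alpha}
\]
has orthonormal columns. One checks that $\mathcal{U}^{\langle i\rangle}=(L_{i-1}\otimes I_d)\,M_i$ after the appropriate reshaping. Since $L_{i-1}\otimes I_d$ also has orthonormal columns, it is injective, and this factorization gives $\operatorname{rank}M_i=\operatorname{rank}\mathcal{U}^{\langle i\rangle}=r_i$.

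An alternative avoids the rank identity entirely: a cruder count suffices. $M_i$ has $D_{i-1}d$ rows and $d^{n-i}$ columns, so $D_i\le\min(dD_{i-1},d^{n-i})$. Induction with $D_0=1$ then gives $D_i\le \min(d^i,d^{n-i})$ directly. I would present this simpler bound as the main argument and mention the unfolding-rank refinement as a remark.

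\textbf{Step 4 (maximum bond dimension).} The function $\min(i,n-i)$ is maximized at $i=\lfloor n/2\rfloor$, where it equals $\lfloor n/2\rfloor$. Hence $\max_i D_i\le d^{\lfloor n/2\rfloor}$, with this bound occurring at the central cut between sites $\lfloor n/2\rfloor$ and $\lfloor n/2\rfloor+1$. This bound is attained, for example, by a generic tensor whose central unfolding has full rank.
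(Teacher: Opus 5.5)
Your proposal is correct and follows the same route as the paper: a left-to-right TT-SVD sweep, with each bond dimension bounded by the size of the relevant unfolding and the maximum located at the central cut $i=\lfloor n/2\rfloor$. Your Step 3 (the factorization $\mathcal{U}^{\langle i\rangle}=(L_{i-1}\otimes I_d)M_i$ with orthonormal $L_{i-1}$), or equally your crude count $D_i\le\min(dD_{i-1},d^{n-i})$ with induction, supplies a step the paper skips: the paper treats the matrix decomposed at step $i$ as the full $d^i\times d^{n-i}$ unfolding, whereas the matrix actually formed in the sweep is $(D_{i-1}d)\times d^{n-i}$.
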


\begin{proof}
The construction follows from successive applications of the singular value
decomposition (SVD). First reshape $\mathcal{U}$ into a matrix of size
$d \times d^{n-1}$ by grouping indices as $(s_1)$ and $(s_2,\dots,s_n)$, and
perform an SVD:
\[
\mathcal{U}_{s_1,(s_2 \dots s_n)}
= \sum_{\alpha_1}
L_{s_1 \alpha_1}\, \Sigma_{\alpha_1}\,
R^\top_{\alpha_1,(s_2 \dots s_n)}.
\]
Define the first MPS core by $A_1^{(s_1)}[1,\alpha_1] := L_{s_1\alpha_1}$ and
absorb $\Sigma$ into the remaining tensor
$\widetilde{\mathcal{U}}^{(2)}_{\alpha_1,(s_2 \dots s_n)}
:= \Sigma_{\alpha_1} R^\top_{\alpha_1,(s_2 \dots s_n)}$.
At step $i$, reshape the remaining tensor into the unfolding that groups the
first $i$ physical indices and the remaining $n-i$ indices, yielding a matrix
of size $d^i \times d^{n-i}$, and perform an SVD. The rank of this matrix is at
most
\[
\min(d^i, d^{n-i}) = d^{\min(i,n-i)},
\]
which bounds the bond dimension $D_i$. The quantity $\min(i,n-i)$ is maximized
at $i=\lfloor n/2 \rfloor$, giving $\max_i D_i \le d^{\lfloor n/2 \rfloor}$.
After $n-1$ steps, the full tensor $\mathcal{U}$ is exactly represented as an
MPS with the stated bond-dimension bounds.
\end{proof}

\begin{proposition}[Optimality of SVD-based compression]
\label{prop:svd-optimal}
Let $X \in \mathbb{R}^{m \times n}$ have singular value decomposition
$X = U \Sigma V^\top$, and fix an integer $r$ with $0 \le r \le p:=\min(m,n)$.
Let $X_r = U_r \Sigma_r V_r^\top$ be the rank-$r$ truncation obtained by
retaining the $r$ largest singular values. Then $X_r$ is a best rank-$r$
approximation of $X$ in the Frobenius norm, i.e.,
\[
\|X - X_r\|_F
= \min_{\operatorname{rank}(Y) \le r} \|X - Y\|_F.
\]
In particular, the SVD-based truncation used in MPS/MPO compression is
\emph{locally} Frobenius-norm optimal for each matrix unfolding at the moment of
truncation (though sequential truncations need not yield a globally optimal
tensor-network approximation).
\end{proposition}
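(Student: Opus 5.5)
We prove the Eckart--Young theorem by reducing the problem to a statement about singular values. The key input is a perturbation inequality. Write $\sigma_1\ge\cdots\ge\sigma_p\ge 0$ for the singular values of $X$. First, unitary invariance of the Frobenius norm gives
\[
\|X-X_r\|_F^2=\|\Sigma-\Sigma_r\|_F^2=\sum_{j>r}\sigma_j^2 .
\]
Second, since $X_r$ itself has rank at most $r$, the minimum is at most this value. It therefore suffices to show that every $Y$ with $\operatorname{rank}(Y)\le r$ satisfies
\[
\|X-Y\|_F^2\ge\sum_{j>r}\sigma_j^2 .
\]

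For the lower bound I will use Weyl's inequality for singular values:
\[
\sigma_{i+j-1}(A+B)\le\sigma_i(A)+\sigma_j(B).
\]
This follows from the Courant--Fischer min--max characterization $\sigma_k(M)=\min_{\dim W\le k-1}\max_{x\perp W,\|x\|=1}\|Mx\|$. Take $W$ to be the sum of the subspaces that are optimal for $A$ (of dimension $i-1$) and for $B$ (of dimension $j-1$). This $W$ has dimension at most $i+j-2$, so $\sigma_{i+j-1}(A+B)\le\max_{x\perp W,\|x\|=1}\|(A+B)x\|\le\sigma_i(A)+\sigma_j(B)$ by the triangle inequality.

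Apply Weyl's inequality with $A=X-Y$, $B=Y$, and $j=r+1$. Because $\operatorname{rank}(Y)\le r$, we have $\sigma_{r+1}(Y)=0$, and so
\[
\sigma_{i+r}(X)\le\sigma_i(X-Y)\qquad\text{for } i=1,\dots,p-r.
\]
Squaring and summing over $i$ then yields
\[
\|X-Y\|_F^2=\sum_i\sigma_i(X-Y)^2\ge\sum_{i=1}^{p-r}\sigma_{i+r}(X)^2=\sum_{j>r}\sigma_j^2 .
\]
This closes the argument; the edge cases $r=0$ and $r=p$ are trivial.

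The main obstacle is establishing the Weyl inequality cleanly. The only delicate point is the dimension count and the handling of maximizers. An alternative route avoids it: write $Y=QZ$ with $Q$ an $m\times r$ orthonormal basis for the range of $Y$. Optimizing over $Z$ first reduces $\|X-Y\|_F^2$ to $\|X\|_F^2-\|Q^\top X\|_F^2$. Then Ky Fan's maximum principle, $\operatorname{tr}(Q^\top XX^\top Q)\le\sum_{j\le r}\sigma_j^2$, finishes the proof. I would present whichever of the two routes is shorter.

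The remark about MPS/MPO compression is then immediate. Each truncation in the sweep applies exactly this statement to the current unfolding matrix, so it is locally optimal. No claim of global optimality is made, so nothing further needs to be proved.
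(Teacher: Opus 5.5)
Your proof is correct, and its skeleton matches the paper's. Both compute $\|X-X_r\|_F^2=\sum_{j>r}\sigma_j^2$, and both then need the lower bound $\|X-Y\|_F^2\ge\sum_{j>r}\sigma_j^2$ for every $Y$ with $\operatorname{rank}(Y)\le r$. The difference is that the paper never proves this lower bound: it asserts it as the content of the Eckart--Young--Mirsky theorem, so its argument is essentially a citation. You actually establish it, via Weyl's inequality specialized to $A=X-Y$, $B=Y$, $j=r+1$. Your index bookkeeping is sound: you only use $i+r\le p\le n$, so the min--max characterization over $\mathbb{R}^n$ applies, and the case $r=p$ is vacuous. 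Your route buys self-containedness and a stronger intermediate fact. The termwise domination $\sigma_{i+r}(X)\le\sigma_i(X-Y)$ says the singular values of $X-Y$ dominate those of $X-X_r$ entry by entry. That gives optimality of $X_r$ in the spectral norm and in every unitarily invariant norm, not only the Frobenius norm. Your alternative projection/Ky Fan route is equally valid and perhaps more natural for the MPS setting. It exhibits the truncation as the orthogonal projection $X_r=U_rU_r^\top X$ onto the leading left singular subspace, which is what keeping an isometric (left-canonical) core does in a sweep; the price is that it is tied to the Frobenius norm. Your handling of the ``in particular'' clause (local optimality per unfolding, no claim of global optimality) matches the paper's intent.
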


\begin{proof}
This result is a direct consequence of the Eckart--Young--Mirsky theorem.
Let the singular values of $X$ be
$\sigma_1 \ge \sigma_2 \ge \dots \ge \sigma_p \ge 0$, where $p = \min(m,n)$.
Then the truncated SVD satisfies
$\|X - X_r\|_F^2 = \sum_{i=r+1}^p \sigma_i^2$,
and for any matrix $Y$ with $\operatorname{rank}(Y) \le r$,
$\|X - Y\|_F^2 \ge \sum_{i=r+1}^p \sigma_i^2$.
Therefore, $X_r$ minimizes the Frobenius norm error among all rank-$r$
approximations of $X$.
\end{proof}

\begin{proposition}[Multi-step error propagation]
\label{prop:error-propagation}
Let $\mathcal{F}_{\Delta t}: \mathbb{R}^N \to \mathbb{R}^N$ denote the exact
discrete-time flow map. Define the implemented QTN one-step predictor in
physical space by
\[
\widehat{\mathcal{F}}(\mathbf{u})
:= \mathrm{decode}\!\left(
\Pi_{\chi_{\max},\varepsilon_{\mathrm{SVD}}}\big(
\mathcal{T}(\mathrm{encode}(\mathbf{u}))
\big)\right),
\]
where $\mathcal{T}$ is an MPO/TT-operator in tensorized coordinates and
$\Pi_{\chi_{\max},\varepsilon_{\mathrm{SVD}}}$ denotes the
canonicalization+truncation projection.

Assume $\mathcal{F}_{\Delta t}$ is Lipschitz on a set $\mathcal{S}\subset\mathbb{R}^N$
containing the exact trajectory $\{\mathbf{u}_k\}_{k\ge 0}$ and the predicted
trajectory $\{\mathbf{u}^{\mathrm{QTN}}_k\}_{k\ge 0}$, with constant $L\ge 0$, that is
\[
\|\mathcal{F}_{\Delta t}(\mathbf{u}) - \mathcal{F}_{\Delta t}(\mathbf{v})\|_2
\le L \|\mathbf{u} - \mathbf{v}\|_2,
\qquad \forall\,\mathbf{u},\mathbf{v}\in\mathcal{S}.
\]
Define the uniform one-step approximation error on $\mathcal{S}$ as
\[
e := \sup_{\mathbf{u} \in \mathcal{S}}
\|\widehat{\mathcal{F}}(\mathbf{u}) - \mathcal{F}_{\Delta t}(\mathbf{u})\|_2.
\]
Let the exact and QTN rollouts be generated by
\[
\mathbf{u}_{k+1} = \mathcal{F}_{\Delta t}(\mathbf{u}_k),
\qquad
\mathbf{u}_{k+1}^{\mathrm{QTN}} = \widehat{\mathcal{F}}(\mathbf{u}_k^{\mathrm{QTN}}),
\qquad
\mathbf{u}_0^{\mathrm{QTN}}=\mathbf{u}_0.
\]
Then, for any $m \ge 1$,
\[
\|\mathbf{u}_{m}^{\mathrm{QTN}} - \mathbf{u}_{m}\|_2
\le
\sum_{j=0}^{m-1} L^{j}\, e.
\]
\end{proposition}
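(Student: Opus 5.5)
The plan is the standard telescoping (discrete Gronwall) argument. I define the error $\delta_k := \|\mathbf{u}_k^{\mathrm{QTN}} - \mathbf{u}_k\|_2$, with $\delta_0 = 0$ because $\mathbf{u}_0^{\mathrm{QTN}} = \mathbf{u}_0$. The aim is the one-step recursion
\[
\delta_{k+1} \le L\,\delta_k + e, \qquad k \ge 0,
\]
and then to unroll it.

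To obtain the recursion, I insert the intermediate quantity $\mathcal{F}_{\Delta t}(\mathbf{u}_k^{\mathrm{QTN}})$ and apply the triangle inequality:
\[
\delta_{k+1} \le \big\|\widehat{\mathcal{F}}(\mathbf{u}_k^{\mathrm{QTN}}) - \mathcal{F}_{\Delta t}(\mathbf{u}_k^{\mathrm{QTN}})\big\|_2 + \big\|\mathcal{F}_{\Delta t}(\mathbf{u}_k^{\mathrm{QTN}}) - \mathcal{F}_{\Delta t}(\mathbf{u}_k)\big\|_2 .
\]
I bound the two terms separately.

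\begin{itemize}
\item The first term is the local one-step defect at the point $\mathbf{u}_k^{\mathrm{QTN}}$. This point lies in $\mathcal{S}$ by hypothesis, so the term is at most $e$ by the definition of $e$ as a supremum over $\mathcal{S}$.
\item The second term compares the exact map at two points that both lie in $\mathcal{S}$, since $\mathcal{S}$ contains both trajectories. The Lipschitz assumption therefore bounds it by $L\delta_k$.
\end{itemize}

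The only point needing care is the order of the splitting. The defect must be evaluated along the predicted trajectory and the Lipschitz constant used for the exact map. The alternative splitting would require a Lipschitz bound on $\widehat{\mathcal{F}}$, and $\widehat{\mathcal{F}}$ contains the nonsmooth truncation $\Pi_{\chi_{\max},\varepsilon_{\mathrm{SVD}}}$, for which no such bound is assumed. With the splitting chosen as above, the properties of $\Pi$, including the local optimality of Proposition~\ref{prop:svd-optimal}, are never needed; they enter only implicitly through the size of $e$.

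Finally, I prove by induction on $m$ that $\delta_m \le \sum_{j=0}^{m-1} L^j e$.

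\begin{itemize}
\item Base case $m=1$: the recursion with $\delta_0 = 0$ gives $\delta_1 \le e$.
\item Inductive step: assuming the bound for $m$, I get $\delta_{m+1} \le L\sum_{j=0}^{m-1}L^j e + e = \sum_{j=0}^{m} L^j e$.
\end{itemize}

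I do not expect any genuine obstacle. Optionally, I would remark that the sum equals $\frac{L^m-1}{L-1}e$ for $L \neq 1$ and $me$ for $L = 1$.
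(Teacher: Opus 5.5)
Your proposal is correct and matches the paper's proof: you add and subtract $\mathcal{F}_{\Delta t}(\mathbf{u}_k^{\mathrm{QTN}})$, bound the two terms by $e$ and $L\delta_k$ using that both trajectories lie in $\mathcal{S}$, and unroll from $\delta_0=0$, with the induction made explicit. Your observation that the splitting must evaluate the defect along the predicted trajectory, so no Lipschitz bound on the truncated predictor $\widehat{\mathcal{F}}$ is needed, is accurate and worth keeping.
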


\begin{proof}
Define $\delta_k := \|\mathbf{u}_k^{\mathrm{QTN}} - \mathbf{u}_k\|_2$.
For any $k \ge 0$, add and subtract $\mathcal{F}_{\Delta t}(\mathbf{u}_k^{\mathrm{QTN}})$:
\begin{align*}
\delta_{k+1}
&= \|\widehat{\mathcal{F}}(\mathbf{u}_k^{\mathrm{QTN}})
      - \mathcal{F}_{\Delta t}(\mathbf{u}_k)\|_2 \\
&\le
\|\widehat{\mathcal{F}}(\mathbf{u}_k^{\mathrm{QTN}})
  - \mathcal{F}_{\Delta t}(\mathbf{u}_k^{\mathrm{QTN}})\|_2
+
\|\mathcal{F}_{\Delta t}(\mathbf{u}_k^{\mathrm{QTN}})
  - \mathcal{F}_{\Delta t}(\mathbf{u}_k)\|_2 \\
&\le e + L\,\delta_k,
\end{align*}
where we used the definition of $e$ and Lipschitz continuity on $\mathcal{S}$.
Since $\delta_0=0$, unrolling the recursion yields
$\delta_m \le \sum_{j=0}^{m-1} L^{j} e$.
\end{proof}

\begin{remark}[Restart-time bound and interpretation]
\label{rem:restart-bound}
More generally, if the QTN rollout is initialized at some time $k$ with
possibly mismatched state $\mathbf{u}_k^{\mathrm{QTN}} \neq \mathbf{u}_k$, then
for any $m\ge 1$,
\[
\|\mathbf{u}_{k+m}^{\mathrm{QTN}} - \mathbf{u}_{k+m}\|_2
\le
L^{m}\|\mathbf{u}_k^{\mathrm{QTN}} - \mathbf{u}_k\|_2
+
\sum_{j=0}^{m-1} L^{j}\, e.
\]
Thus, even small one-step errors can accumulate under repeated time stepping, and the amplification is governed by the Lipschitz constant $L$ of the exact flow map on the relevant trajectory set. In particular, contractive dynamics ($L < 1$) keep QTN errors uniformly bounded, neutral dynamics ($L \approx 1$) lead to at most linear growth with the prediction horizon, and expansive dynamics ($L > 1$) can amplify small discrepancies rapidly.

\end{remark}

\section{Numerical Experiments}
\label{sec:experiments}

This section evaluates the proposed QTN framework for approximating discrete-time
PDE flow maps via compressed time stepping. In all experiments, we generate
reference solution trajectories using a classical time-integration scheme and
compare the QTN rollout against the reference rollout. At inference, we roll out
the projected one-step predictor
\[
\widehat{\mathcal{F}}(\mathbf{u})
= \mathrm{decode}\!\left(
\Pi_{\chi_{\max},\varepsilon_{\mathrm{SVD}}}\big(
\mathcal{T}(\mathrm{encode}(\mathbf{u}))
\big)\right)
\]
for multiple steps and compare with the reference. We report qualitative
comparisons via snapshots and signed difference plots, and quantify error growth
using relative $\ell_2$ norms and prediction-horizon curves.

Common tensorization and truncation settings:
Unless stated otherwise, we use base-$d$ tensorization with $N=d^n$ (binary/QTT
corresponds to $d=2$), a locality-preserving architecture $\mathcal{A}$, an MPS
bond cap $\chi_{\max}$, an MPO bond cap $R_{\max}$, and SVD threshold
$\varepsilon_{\mathrm{SVD}}$. After each MPO application, we apply
$\Pi_{\chi_{\max},\varepsilon_{\mathrm{SVD}}}$ to control rank growth, as
described in Section~\ref{sec:methods}.

\paragraph{1D Linear Advection--Diffusion}
We consider $u_t + c u_x = \nu u_{xx}$ on $[0,1]$ with homogeneous Dirichlet
boundary conditions and initial condition $u_0(x)=\exp(-100(x-0.3)^2)$, using
$c=0.5$, $\nu=0.01$, $T=0.5$, and $N=2^9=512$ grid points. 
We use an MPS bond cap $\chi_{\max}=60$ and truncation tolerance $\varepsilon_{\mathrm{SVD}}=10^{-12}$, applied after each Euler step (after enforcing Dirichlet boundary conditions); the Dirichlet mask MPS is compressed with bond cap $30$.
Reference trajectories
are generated by a method-of-lines discretization (second-order centered
differences) and \texttt{RK45}. We implement an MPO-based one-step update
$\mathcal{T}$ in tensorized coordinates with binary tensorization ($d=2$, $n=9$),
applying $\Pi_{\chi_{\max},\varepsilon_{\mathrm{SVD}}}$ after each MPO
application. We then roll out the projected one-step predictor and compare with
the RK45 rollout. 
Figure~\ref{fig:advdiff_1d_comparison}(a--c) shows the QTN
rollout, reference, and signed difference $u_{\mathrm{RK45}}-u_{\mathrm{QTN}}$
(time-aligned). 

Figure~\ref{fig:advdiff_1d_comparison}(d) reports the restart-averaged relative $\ell_2$ error as a function of prediction horizon $m$. Specifically, for each restart time $k$, we initialize the QTN model at $u_k$ and measure
$\frac{\|u^{\mathrm{QTN}}_{k+m} - u_{k+m}\|_2}{\|u_{k+m}\|_2}$,
then average over all valid restart times $k$.
The figure shows small and weakly increasing error, consistent with stable, diffusion-dominated dynamics.

\begin{figure}[t]
    \centering
    \includegraphics[width=\linewidth]{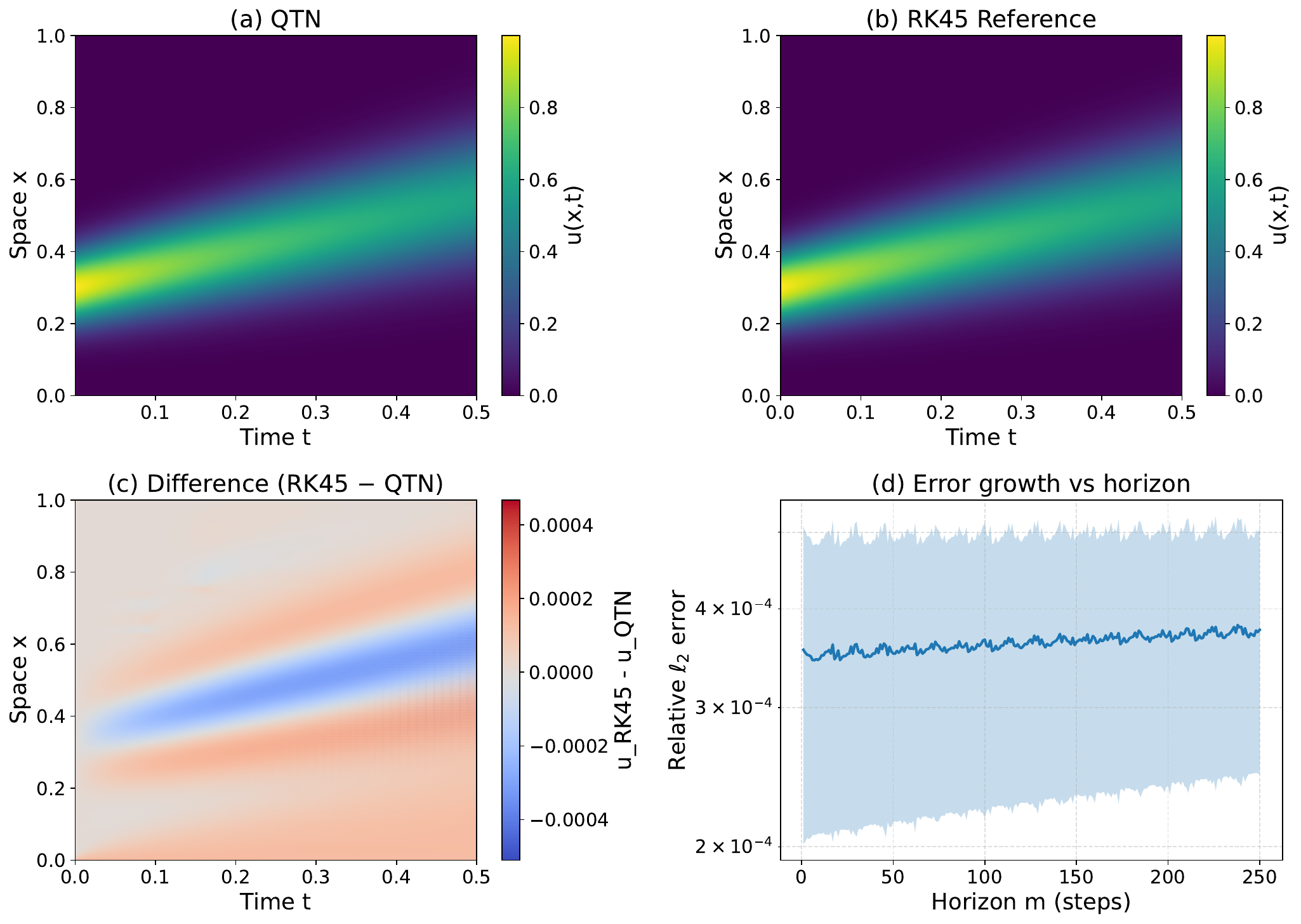}
   \caption{\small
1D advection--diffusion: (a) QTN rollout from iterating the one-step MPO predictor
with truncation; (b) RK45 reference; (c) signed difference
$u_{\mathrm{RK45}}-u_{\mathrm{QTN}}$ (time-aligned); (d) restart-averaged relative
$\ell_2$ error vs.\ prediction horizon $m$.
}
    \label{fig:advdiff_1d_comparison}
\end{figure}

\paragraph{2D Linear Advection--Diffusion}
We consider the two-dimensional advection--diffusion equation
$u_t + c_x u_x + c_y u_y = \nu(u_{xx}+u_{yy})$ on $[0,1]^2$ with homogeneous
Dirichlet boundary conditions. 
We cap the state bond dimension at $\chi_{\max}=80$ and apply truncation with tolerance $\varepsilon_{\mathrm{SVD}}=10^{-12}$ after each Euler step (after applying the Dirichlet mask); the boundary mask MPS is compressed with bond cap $40$.
The initial condition is a Gaussian pulse
centered at $(0.3,0.4)$ with $(c_x,c_y)=(0.5,0.2)$, $\nu=0.01$, final time $T=1$,
and a $2^6\times 2^6$ grid (total $N=2^{12}$). Reference trajectories are
generated by a method-of-lines discretization (centered finite differences) and
\texttt{RK45}.
We flatten the 2D grid index and apply a locality-preserving tensorization
architecture $\mathcal{A}$ to encode states as MPS, then implement an MPO-based
one-step update $\mathcal{T}$ with truncation via
$\Pi_{\chi_{\max},\varepsilon_{\mathrm{SVD}}}$ after each MPO application. We
evaluate multi-step rollouts by iterating the projected one-step predictor and
comparing to RK45 in Figure~\ref{fig:advdiff_2d_comparison}. It can be seen that the errors remain
spatially smooth over short to moderate horizons, indicating that low-rank MPO
updates capture the dominant transport--diffusion dynamics in this regime.
\begin{figure}[t]
  \centering
  \includegraphics[width=\linewidth]{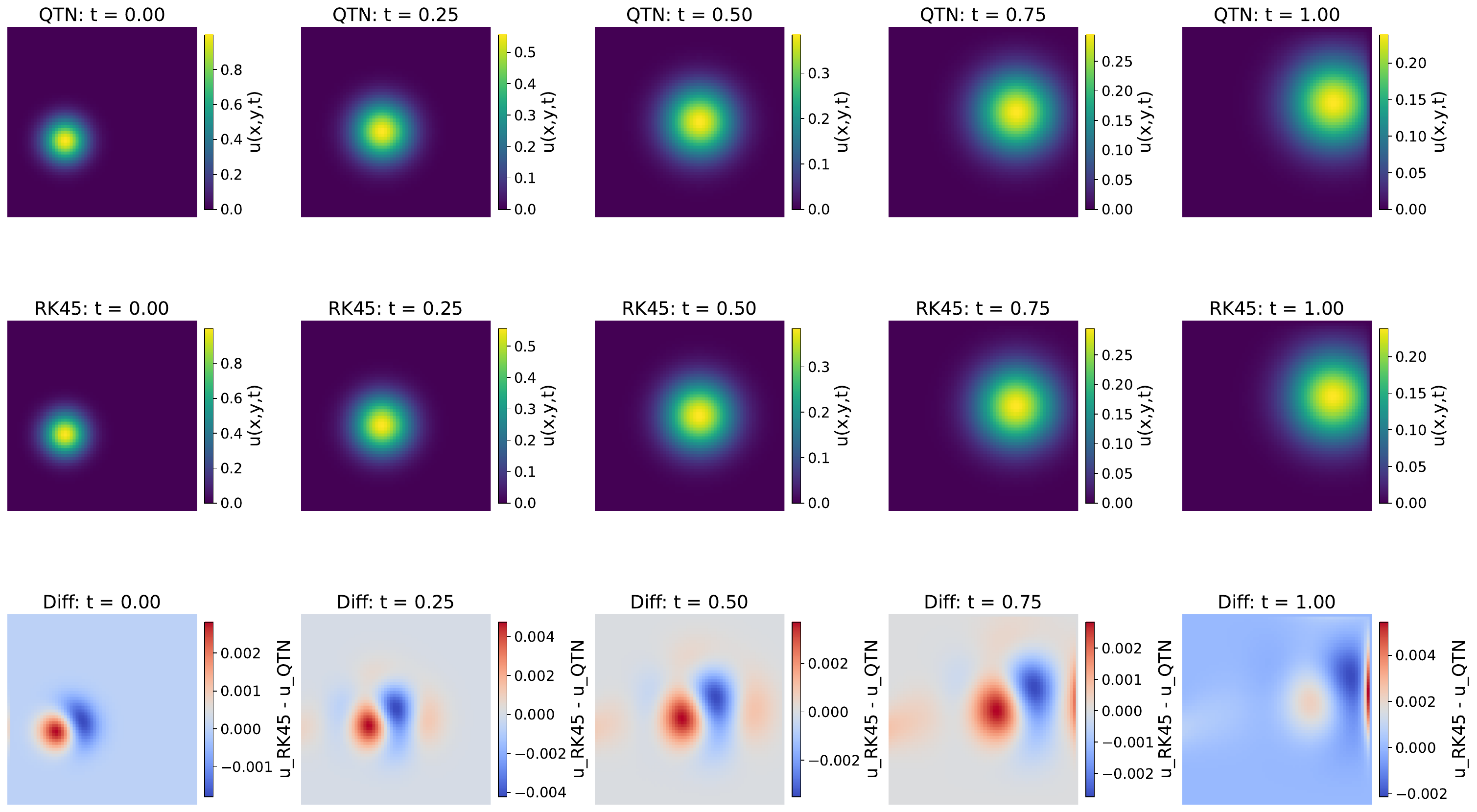}
  \caption{\small
  2D advection--diffusion: comparison of QTN rollout and RK45 reference at
  $t=\{0,\,0.25,\,0.50,\,0.75,\,1.00\}$.
  Top row: QTN rollout obtained by iterating the one-step predictor with
  truncation after each step (states stored in compressed MPS form).
  Middle row: RK45 reference solution at matching times.
  Bottom row: signed difference $u_{\mathrm{RK45}}-u_{\mathrm{QTN}}$.
  }
  \label{fig:advdiff_2d_comparison}
\end{figure}

\paragraph{1D Viscous Burgers Equation}
To assess performance beyond linear dynamics, we consider the viscous Burgers
equation $u_t + u u_x = \nu u_{xx}$ on $[0,1]$ with periodic boundary conditions
and initial condition $u_0(x)=1+0.5\sin(2\pi x)$, viscosity $\nu=0.01$, and final
time $T=0.5$. The spatial grid uses $N=2^9$ points. We use an MPS bond cap
$\chi_{\max}=60$ and truncation tolerance $\varepsilon_{\mathrm{SVD}}=10^{-12}$
applied after each Euler step; periodic boundary conditions are encoded directly
in the finite-difference derivative MPOs.
A reference solution is computed via method-of-lines discretization with periodic
finite differences and \texttt{RK45} time integration. We implement an MPO-based
one-step update $\mathcal{T}$ in tensorized coordinates with truncation after each
application. Figure~\ref{fig:burgers_1d_comparison} compares QTN rollouts with the
RK45 reference, shows the signed difference, and reports restart-averaged relative
$\ell_2$ error versus prediction horizon.
The signed difference field in Figure~\ref{fig:burgers_1d_comparison}(c) remains spatially smooth and aligned with the transported wave structure, indicating that discrepancies arise primarily from gradual phase and amplitude mismatch rather than numerical instability or spurious oscillations.
Figure~\ref{fig:burgers_1d_comparison}(d) reports the restart-averaged relative $\ell_2$ error versus prediction horizon $m$, showing more fluctuating error than in the linear advection–diffusion case, reflecting the increased sensitivity of the nonlinear dynamics, yet remaining stable over the tested horizon.

\begin{figure}[t]
  \centering
  \includegraphics[width=\linewidth]{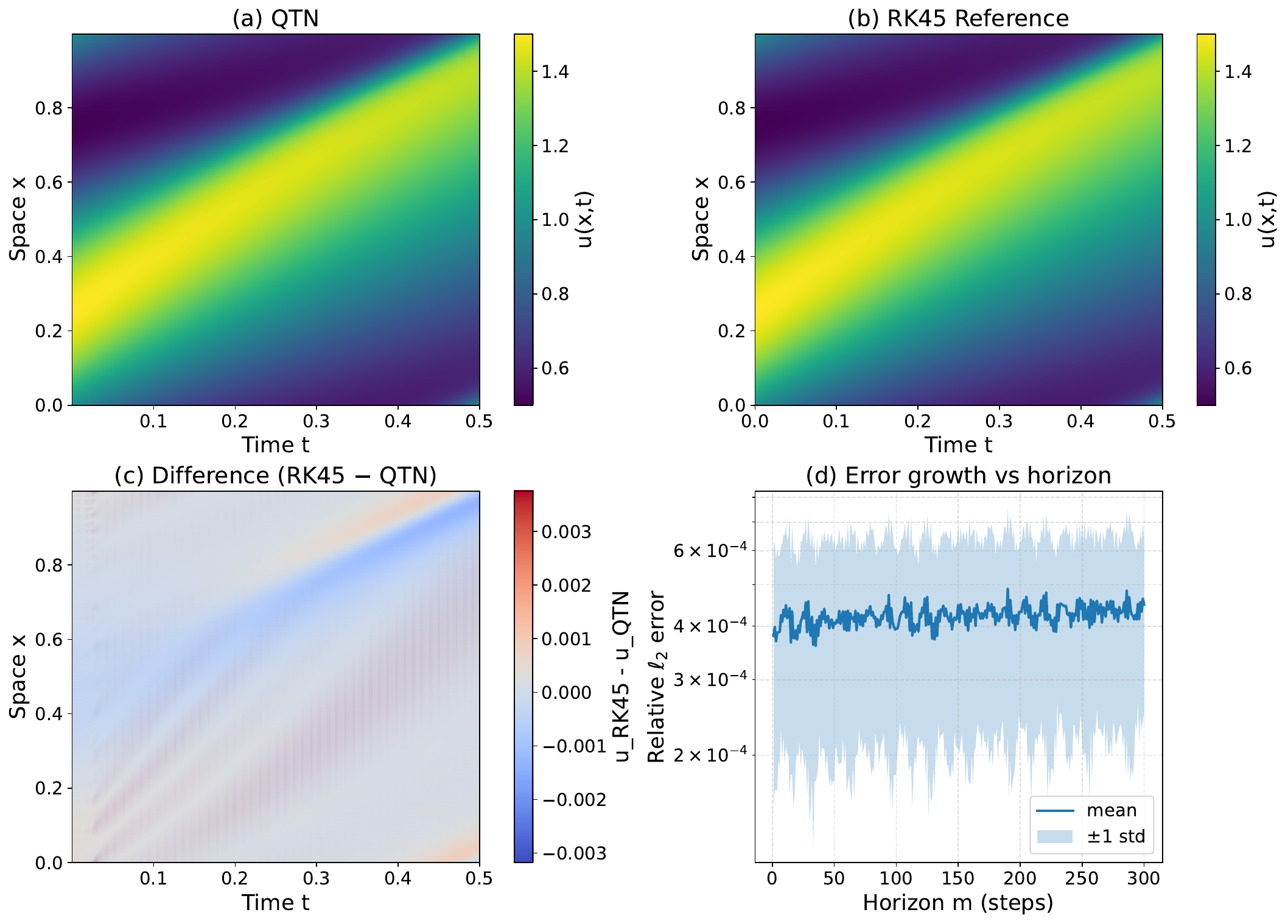}
  \caption{\small
  1D viscous Burgers (periodic): (a) QTN rollout, (b) RK45 reference,
  (c) signed difference $u_{\mathrm{RK45}}-u_{\mathrm{QTN}}$ (time-aligned),
  and (d) restart-averaged relative $\ell_2$ error versus prediction horizon
  $m$ (mean $\pm 1$ std).}
  \label{fig:burgers_1d_comparison}
\end{figure}

\paragraph{2D Burgers Equation (Periodic)} We consider the two-dimensional viscous Burgers equation $u_t + u\,u_x + u\,u_y = \nu (u_{xx} + u_{yy})$ on
$[0,1]^2$ with
periodic boundary conditions and viscosity $\nu=0.01$, using a $64\times 64$
grid (total $N=4096=2^{12}$) and final time $T=1$. 
We use an MPS bond cap $\chi_{\max}=120$ and truncation tolerance $\varepsilon_{\mathrm{SVD}}=10^{-12}$ applied after each Euler step; periodic boundary conditions are encoded directly in the finite-difference derivative MPOs.
Reference trajectories are
generated by a method-of-lines discretization with centered finite differences
and \texttt{RK45} time integration. We encode 2D states in MPS form using a
locality-preserving tensorization architecture and evaluate multi-step rollouts
by iterating the one-step predictor with truncation after each step.
Figure~\ref{fig:burgers_2d_comparison}(a--c) compares the QTN rollout with
the RK45 reference and shows the signed difference $u_{\mathrm{RK45}}-u_{\mathrm{QTN}}$
at representative times. The difference fields remain spatially smooth over the
shown horizon, while localized discrepancies gradually develop as nonlinear
interactions accumulate.

\begin{figure}[t]
  \centering
  \includegraphics[width=\linewidth]{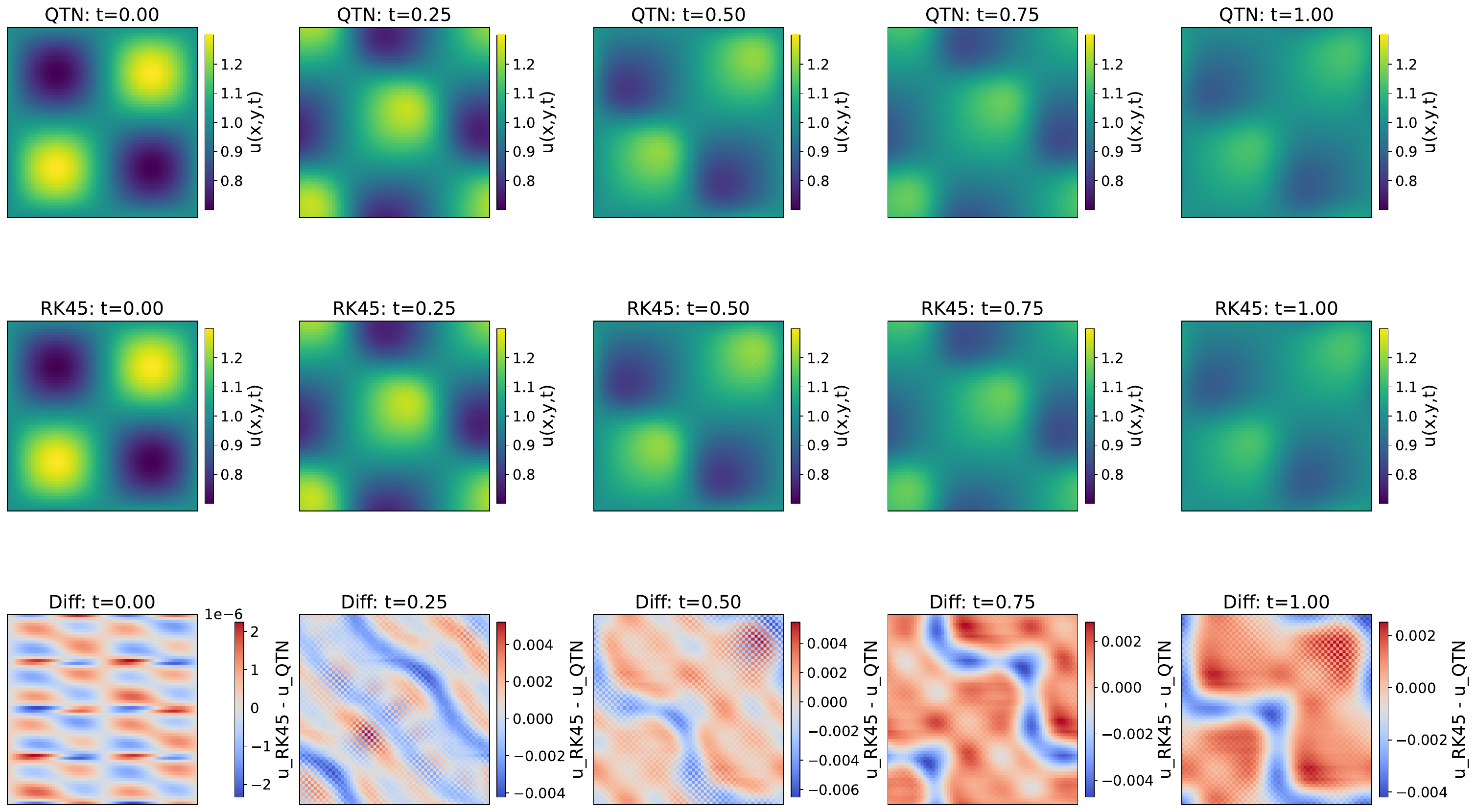}
  \caption{\small
  2D viscous Burgers: comparison of QTN rollout and RK45 reference at
  $t=\{0,\,0.25,\,0.50,\,0.75,\,1.00\}$.
  Top row: QTN rollout obtained by iterating the one-step predictor with
  truncation after each step.
  Middle row: RK45 reference snapshots at matching times.
  Bottom row: signed difference $u_{\mathrm{RK45}}-u_{\mathrm{QTN}}$.}
  \label{fig:burgers_2d_comparison}
\end{figure}

\section{Conclusion}
We investigated quantum-inspired tensor networks as a fully classical framework
for approximating discrete-time flow maps of hydrodynamic PDEs via compressed
time stepping. The proposed approach encodes tensorized PDE states in MPS/TT
form and represents one-step evolution updates as low-rank MPO/TT-operators, with
rank growth controlled by canonicalization and SVD-based truncation. This yields
an interpretable tensor-network baseline that avoids dense representations and
does not impose explicit physical constraints (e.g., conservation, reversibility,
or unitarity), making it applicable to dissipative dynamics.

Numerical experiments on one- and two-dimensional linear advection--diffusion
and viscous Burgers equations show that low-rank MPO updates can accurately
capture \emph{short-horizon} dynamics in smooth, diffusion-dominated regimes,
while multi-step rollouts exhibit increasing error consistent with accumulated
one-step mismatch and compression effects. For nonlinear PDEs, the projected
low-rank update should be interpreted as an \emph{approximation} of the one-step
map on the states encountered along the rollout, which explains both its
effectiveness for local prediction and its limitations for long-horizon
extrapolation.

This work establishes quantum-inspired tensor networks as a principled,
structured, and scalable baseline for PDE flow-map approximation. Promising
future directions include incorporating physics-based inductive biases (e.g.,
conservation or stability constraints), stability-aware and multi-step objectives,
adaptive rank/truncation strategies, and extensions toward data-driven operator
identification and hardware-compatible or quantum-motivated parameterizations.

\bibliographystyle{IEEEtran}
\bibliography{IEEEabrv,qccl}

\end{document}